\documentclass[3p]{elsarticle}

\usepackage{epsfig}
\usepackage{color}
\usepackage{subeqnarray,multirow}
\usepackage{enumitem}
\usepackage{textcomp}
\usepackage{booktabs}
\usepackage{algorithm}
\usepackage{algpseudocode}

\usepackage{algorithmicx}

\usepackage{mathtools}
\usepackage{amsthm}
\usepackage{amsmath}
\usepackage{mathtools,amsfonts,cases}
\usepackage{textcomp}
\usepackage{subcaption}
\newcommand{\RNum}[1]{\uppercase\expandafter{\romannumeral #1\relax}}
\usepackage{xcolor}
\biboptions{sort&compress}
\theoremstyle{plain}
\newtheorem{theorem}{Theorem}

\theoremstyle{definition}

\theoremstyle{remark}

\theoremstyle{example}
\newtheorem{exam}{Example}
\newtheorem{corollary}{Corollary}

\makeatletter
\def\ps@pprintTitle{%
  \let\@oddhead\@empty
  \let\@evenhead\@empty
  \let\@oddfoot\@empty
  \let\@evenfoot\@empty}
\makeatother
\begin{document}
\begin{frontmatter}
\title{Adaptation and development of super schemes for unconstrained
optimization problems}
\author[NUM,SHU]{Tugal Zhanlav}
\ead{tzhanlav@yahoo.com}
\author[GMIT]{Lkhamsuren Altangerel}\ead{altangerel@gmit.edu.mn}
\author[SHU]{Khuder Otgondorj \corref{Otgoo}} \ead{
otgondorj@gmail.com}
\address[NUM]{ Institute of Mathematics and Digital Technology, Mongolian Academy of Sciences, 13330 Mongolia}
\address[GMIT]{Faculty of Mechanical and Electrical Engineering, German-Mongolian Institute for Resources and Technology}
\address[SHU]{School of Applied Sciences, Mongolian University of Science and
Technology, 14191 Mongolia} \cortext[Otgoo]{Corresponding author}
\begin{abstract}
In this paper, we propose a class of super-schemes for efficiently solving nonlinear unconstrained optimization problems. The proposed approach introduces two novel choices of step-size parameters, leading to efficient descent directions without requiring second-order information.
We develop one-step, two-step, and three-step iterative schemes (denoted by SS1, SS2, and SS3) and establish that these methods achieve higher-order convergence of orders two, four, and six, respectively. Despite their high convergence rates, the computational complexity of the proposed methods remains comparable to existing gradient-based methods, with a cost of $\mathcal{O}(n^2)$ per iteration.
The proposed methods are simple to implement and do not require complicated line-search procedures. Their effectiveness is demonstrated through extensive numerical experiments on a wide range of problems, including large-scale and ill-conditioned cases. The results show that the proposed methods significantly outperform classical methods, such as the Barzilai–Borwein method and other gradient-based approaches, in terms of iteration count and computational efficiency.

Finally, the numerical results are consistent with the theoretical analysis, confirming the stability of the proposed schemes for test optimization problems.
\end{abstract}
\begin{keyword}
  Unconstrained optimization; Nonlinear systems; Optimal step size; Descent directions; Super schemes

 \noindent
 Mathematics Subject Classification:    65H10;  65Y20; 49M15
\end{keyword}
\end{frontmatter}
\section{Introduction}
We consider nonlinear unconstrained optimization problems
\begin{equation}\label{UNCO-1}
\min f(x), \quad x \in \mathbb{R}^n,
\end{equation}
where $f:\mathbb{R}^n \to \mathbb{R}$ is a convex and twice continuously differentiable function.
There are many iterative algorithms for solving \eqref{UNCO-1} that have a specific form
\begin{equation}\label{UNCO-2}
x_{k+1} = x_k + \alpha_k d_k, \quad k=0, 1,\dots
\end{equation}
Here, $x_0$ is the initial guess for the solution $x^\ast$ of problem \eqref{UNCO-1},  $d_k$ is the
descent direction and $\alpha_k$ is the line search or step-size.
The pair $(\alpha_k,d_k)$ plays a key role in \eqref{UNCO-2} and is obtained using various
approaches and techniques such as steepest descent, Newton direction,
conjugate gradient and so on.
The steepest descent (SD) method exhibits convergence near the minimum point due to its tendency to zigzag and is highly sensitive to ill-conditioned problems. Modifying the step length can help reduce or eliminate zig-zagging. Another extensively studied approach to reduce zig-zagging is the inclusion of a relaxation or damping parameter, as demonstrated in \cite{UNCO-1,Hassan2001,Mac23} and references therein.
In terms of line search,  exact and inexact adaptive step-sizes are often used, along with monotone line search and other techniques. For more details,
we refer to the comprehensive survey \cite{UNCO-4} and \cite{UNCO-1,UNCO-3,Hassan2001,UNCO-12,UNCO-1222,UNCO-13,Sulaiman2018,Li19,Ov20,Zho21,Hes22,Mac23,Joy24}. The success of iteration \eqref{UNCO-2} essentially depends on the suitable choices of
$\alpha_k$ and $d_k$.

In order to find the solution to \eqref{UNCO-1}, we need to consider the following
system of nonlinear gradient equations
\begin{equation}\label{UNCO-3}
g(x) = 0,
\end{equation}
where $g(x) = \nabla f(x)$. We assume that the gradient $g(x)$ is sufficiently differentiable.
The equation \eqref{UNCO-3} is known as the first necessary optimality condition for \eqref{UNCO-1}.
For convex problems, \eqref{UNCO-3} is not only necessary but also sufficient for both local and global optimality \cite{UNCO-1,UNCO-4,UNCO-12}.

Note that all gradient based methods have a linear convergence rate. Therefore,  developing efficient higher order methods is a challenging task.
The aim of this paper is to propose effective iterative algorithms for solving
systems of nonlinear equations \eqref{UNCO-3}.
\section{Super-schemes for nonlinear unconstrained optimization problems}\label{UNCOSec2}
Based on the recent study in \cite{UNCO-10}, we will now consider the following schemes:
\begin{equation}\label{UNCO-20}
\begin{aligned}
y_{k} &= x_k-\alpha_k g(x_k),\\
x_{k+1} &= y_k-\alpha_kT_kg(y_k),
\end{aligned}
\end{equation}
and
\begin{equation}\label{UNCO-30}
\begin{aligned}
y_k &= x_k-\alpha_k g(x_k),\\
z_k &= y_k-\alpha_kT_kg(y_k),\\
x_{k+1} & =z_k-\alpha_kT_kg(z_k),
\end{aligned}
\end{equation}
where
\begin{equation}\label{UNCO-7}
\alpha_k=
\frac{( g(w_k)-g(x_k))^T g(x_k)}
{\|g(w_k)-g(x_k)\|^2},
\end{equation}
or equivalent 
\begin{equation}\label{UNCO-7new}
\alpha_k=
\frac{\|g(x_k)\|^2}
{g(x_k)^{T}(g(w_k)-g(x_k))},
\end{equation}
and
\begin{equation}\label{UNCO-7add}
T_k={\bf1}+\Theta_k+r_k+O(h^2),
\end{equation}
\begin{equation}\label{UNCO-8add}
 \Theta_k=\frac{g(y_k)}{g(x_k)},\quad
r_k=\frac{g(y_k)}{g(w_k)},\quad
 \qquad w_k=x_k+g(x_k),\quad {\bf1}=\Big(1, 1,\dots,1\Big)^T\in \mathbb{R}^n.
\end{equation}
 The parameter $T_k$ can be rewritten as follows:
\begin{equation}\label{UNCOTk-20}
T_k = 1 + \frac{g(x_k)^T g(y_k)}{\|g(x_k)\|^2} + \frac{g(w_k)^T g(y_k)}{\|g(w_k)\|^2}.
\end{equation}
The fourth and sixth-order convergence of iterations \eqref{UNCO-20} and \eqref{UNCO-30} occur under the assumptions that $g(x)$ is sufficiently differentiable, $g'(x)$ is nonsingular at $x^*$ and $x_0$ is close enough to $x^*$. When $x_0$ is far from $x^*$, using optimal step size is often necessary to attain global convergence.
In this scenario, we can rephrase the first sub-step in \eqref{UNCO-20} and \eqref{UNCO-30} as an independent gradient-based iteration, represented by \eqref{UNCO-2}, i.e.,
\begin{equation}\label{UNCO-6}
x_{k+1}=x_k+\alpha_k d_k,
\end{equation}
where
\begin{equation}\label{UNCO-8}
d_k=-g(x_k),
\end{equation}
and the step-size $\alpha_k$ is defined by \eqref{UNCO-7}.
The descent directions given by \eqref{UNCO-8} satisfies the descent direction condition
\begin{equation*}
g(x_k)^Td_k<0.
\end{equation*}
The main advantage of iteration \eqref{UNCO-6} is that the step-size $\alpha_k$ is determined by formula \eqref{UNCO-7}, which eliminates the need for line search algorithms in this case. Additionally, if $g(x)$ is a monotone function, then the following inequality holds:
\begin{equation*}
(g(x)-g(y))^T (x-y) > 0, \quad \forall x \neq y.
\end{equation*}
By choosing  $x=w_k$ and $y=x_k$, we have
\begin{equation}\label{UNCO-8extra}
(g(w_k)-g(x_k))^T g(x_k) > 0,
\end{equation}
This means that $\alpha_k>0$ if  $\alpha_k$ is given by the formula \eqref{UNCO-7}.

It is easy to verify that the step-size $\alpha_k$ defined by \eqref{UNCO-7}
approximately minimizes
\begin{equation}\label{UNCO-12}
\alpha_k = \arg\min_{\alpha}\,
\|g(x_k)-\alpha (g(w_k)-g(x_k))\|^2 .
\end{equation}
Indeed, considering
\begin{equation}\label{UNCO-13}
g(x_k+\alpha d_k)=g(x_k-\alpha g(x_k)).
\end{equation}
and using the Taylor expansion of $g(x)$ at point $x_k$ along with the approximate formula
\[
g'(x_k)\approx \frac{g(w_k)-g(x_k)}{g(x_k)},
\]
we can rewrite \eqref{UNCO-13} as
\begin{equation}\label{UNCO-13extra}
g(x_k-\alpha g(x_k))
= g(x_k)-\alpha g'(x_k)g(x_k)+O(h^2)
= g(x_k)-\alpha (g(w_k)-g(x_k))+O(h^2),
\end{equation}
where $h=g(x_k)$.
\begin{theorem}\label{UCOth2}
Let $g:\mathbb{R}^n \to \mathbb{R}^n$ be a continuous, sufficiently differentiable and monotone function.
Then, for any initial point $x_0 \in \mathbb{R}^n$, the sequence $\{x_k\}$ generated by iteration \eqref{UNCO-6} globally converges to the solution $x^*$ of equation \eqref{UNCO-3}.
\end{theorem}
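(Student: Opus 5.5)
The plan is to show that $\|g(x_k)\|$ decreases monotonically and tends to zero. Monotonicity of $g$ is then used to pass from $g(x_k)\to 0$ to $x_k\to x^*$.

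\textbf{Step 1: one-step residual decrease.} Write $h_k=g(x_k)$ and $s_k=g(w_k)-g(x_k)$, where $w_k=x_k+h_k$. By the mean value theorem in integral form,
\[
s_k=A_k h_k,\qquad A_k=\int_0^1 g'(x_k+t h_k)\,dt .
\]
Because $g$ is monotone and differentiable, $A_k$ is positive semidefinite. Inequality \eqref{UNCO-8extra} makes this strict in the relevant direction, $h_k^TA_kh_k>0$, so $\alpha_k>0$. By \eqref{UNCO-12}, $\alpha_k$ is exactly the minimizer of $\|h_k-\alpha s_k\|^2$. Hence
\[
\|h_k-\alpha_k s_k\|^2=\|h_k\|^2-\frac{(s_k^Th_k)^2}{\|s_k\|^2}=\|h_k\|^2\,(1-\cos^2\theta_k),
\]
where $\theta_k$ is the angle between $h_k$ and $s_k$.

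Next I compare $g(x_{k+1})=g(x_k-\alpha_k h_k)$ with this model residual. Using the expansion \eqref{UNCO-13extra},
\[
g(x_{k+1})=h_k-\alpha_k s_k+R_k,
\]
where the remainder $R_k$ is controlled by the variation of $g'$ along the segments $[x_k,w_k]$ and $[x_k,x_{k+1}]$.

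\textbf{Step 2: make the contraction uniform.} Fix $x_0$ and work on a bounded set $\Omega$ that contains the iterates, for instance a neighbourhood of the level set $\{x:\|g(x)\|\le \|g(x_0)\|\}$. On $\Omega$ the continuity of $g'$ gives constants $0<m\le M$ such that $h^T g'(x) h\ge m\|h\|^2$ and $\|g'(x)\|\le M$. This is where I would need $g'$ to be nonsingular; I would state strong monotonicity on bounded sets as the working interpretation of the hypothesis. These bounds give
\[
\cos\theta_k\ge \frac{m}{M}
\qquad\text{and}\qquad
\alpha_k\le \frac{1}{m}.
\]
Together with a Lipschitz bound $\|R_k\|\le L\|h_k\|^2$, we obtain
\[
\|g(x_{k+1})\|\le \Bigl(\sqrt{1-m^2/M^2}+L'\|h_k\|\Bigr)\|g(x_k)\| .
\]

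\textbf{Step 3: the main obstacle.} The remainder term is not small when $\|h_k\|$ is large, that is, when $x_0$ is far from $x^*$. The difficulty is that the finite-difference probe $w_k=x_k+g(x_k)$ uses an unscaled step. I would first try to avoid Taylor remainders altogether. Write
\[
g(x_{k+1})=(I-\alpha_k B_k)h_k,\qquad B_k=\int_0^1 g'(x_k-t\alpha_k h_k)\,dt .
\]
Then compare $B_k$ with $A_k$: both are averages of $g'$ over segments inside $\Omega$. The quantity $\alpha_k=h_k^Th_k/h_k^TA_kh_k$ lies in $[1/M,1/m]$. I would then aim for $\|I-\alpha_kB_k\|\le q<1$ on $\Omega$. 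This holds cleanly when $M/m<2$, or alternatively once $\|h_k\|$ is small enough that $B_k\approx A_k$. For the far-field phase I would argue only a sufficient decrease of $\|g\|$, which keeps the iterates in $\Omega$, and then switch to the local estimate above. I expect that completing this rigorously may require an additional hypothesis, such as $g'$ uniformly positive definite with bounded condition number, or an implicit assumption in the paper.

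\textbf{Step 4: conclude.} From $\|g(x_k)\|\le q^k\|g(x_0)\|\to 0$, strong monotonicity gives
\[
m\|x_k-x^*\|^2\le (g(x_k)-g(x^*))^T(x_k-x^*)\le \|g(x_k)\|\,\|x_k-x^*\|,
\]
so $\|x_k-x^*\|\le \|g(x_k)\|/m\to 0$. This shows the iterates converge to the unique solution $x^*$ of \eqref{UNCO-3}.
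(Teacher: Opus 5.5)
\textbf{The gap is the far-field phase of your Step 3, and it cannot be closed.} You plan to ``argue only a sufficient decrease of $\|g\|$'' while $\|g(x_k)\|$ is large, then switch to the local estimate. That decrease is false. The statement fails under its own hypotheses, and also under your strengthened reading ($g'$ positive definite on every bounded set).

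For $n=1$, both \eqref{UNCO-7} and \eqref{UNCO-7new} reduce to $\alpha_k=g(x_k)/(g(w_k)-g(x_k))$. So \eqref{UNCO-6} is exactly Steffensen's method $x_{k+1}=x_k-g(x_k)^2/\bigl(g(x_k+g(x_k))-g(x_k)\bigr)$. Take $g=\arctan$. It is the gradient of the smooth, strictly convex $f(x)=x\arctan x-\frac12\ln(1+x^2)$, has $g'>0$ everywhere, and $g'(0)=1$. From $x_0=10$ you get $\alpha_0\approx115.7$ and $x_1\approx-160.2$, so $|g(x_1)|\approx1.565>|g(x_0)|\approx1.471$. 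Then $x_2\approx4\cdot10^4$, and the iterates diverge.

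A global bound $mI\preceq g'(x)\preceq MI$ does not rescue the method once $M>2m$. Take the odd function equal to $Mx$ on $[-c,c]$ and to $\pm\bigl(Mc+m(|x|-c)\bigr)$ outside, smoothed at the kinks. The point $x_0=c(M/m-1)$ (shifted slightly by the smoothing) gives $\alpha_0=1/m$ and $x_1=-x_0$. That is a 2-cycle with constant $|g|$. So the condition $M<2m$ you found in Step 3 is essentially sharp, not an artifact of your estimate.

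What your argument does prove is a corrected theorem:
\begin{itemize}
\item Step 2, with the remainder bound, gives local linear convergence.
\item If $mI\preceq g'(x)\preceq MI$ holds on all of $\mathbb{R}^n$ with $M<2m$, then $g(x_{k+1})=(I-\alpha_kB_k)g(x_k)$ gives $\|g(x_{k+1})\|\le(M/m-1)\|g(x_k)\|$ at every iterate, and Step 4 finishes the proof.
\end{itemize}
This version also needs no set $\Omega$. As written, your $\Omega$ presupposes that the iterates and the probe points $w_k$ stay in a bounded level set, which is the very thing to be shown.

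\textbf{How the paper handles this step.} It gets past it by assumption rather than argument. In \eqref{UNCO-P1} it drops the higher-order term, which amounts to setting your remainder $R_k$ to zero. It then reads \eqref{UNCO-P2} as the exact decrease \eqref{UNCO-P5}, which the $\arctan$ example contradicts. It also passes from ``decreasing and bounded below'' to $\|g(x_k)\|\to0$, and from $g(x_k)\to0$ to $x_k\to x^*$ ``by continuity''. Neither step follows as written.

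Your Step 1 (exact minimization of $\|h_k-\alpha s_k\|^2$) and Step 4 ($\|x_k-x^*\|\le\|g(x_k)\|/m$ from strong monotonicity) are the rigorous versions of what the paper asserts. You also located the real difficulty correctly. But Step 3 cannot be completed without an explicit global hypothesis of the kind above, so your proposal cannot prove the theorem as stated.
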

\begin{proof}
From \eqref{UNCO-12}, \eqref{UNCO-13extra}, we obtain the quadratic approximation
\begin{equation}\label{UNCO-P1}
\|g(x_k-\alpha g(x_k))\|^2
= \|g(x_k)\|^2
-2\alpha\, ( g(x_k), g(w_k)-g(x_k))
+\alpha^2 \|g(w_k)-g(x_k)\|^2,
\end{equation}
where higher-order small term is neglected.

The right-hand side of \eqref{UNCO-P1} is a quadratic function of $\alpha$. The minimizer of this function is given by
\begin{equation} \label{UNCO-15}
\alpha_k=
\frac{(g(w_k)-g(x_k))^T g(x_k)}
{\|g(w_k)-g(x_k)\|^2},
\end{equation}
which coincides with \eqref{UNCO-7}. Substituting $\alpha=\alpha_k$ into \eqref{UNCO-P1}, we obtain
\begin{equation}\label{UNCO-P2}
\|g(x_k-\alpha_k g(x_k))\|^2
= \|g(x_k)\|^2
- \frac{\big((g(w_k)-g(x_k))^T g(x_k)\big)^2}
{\|g(w_k)-g(x_k)\|^2}.
\end{equation}
By virtue of \eqref{UNCO-8extra} and from \eqref{UNCO-P2} we can deduce that
\begin{equation}\label{UNCO-P5}
\|g(x_{k+1})\| < \|g(x_k)\|,\quad k=0, 1,\dots
\end{equation}
In other words, since the sequence $\{\|g(x_k)\|\}$ is monotonically decreasing and bounded below by zero, it is convergent. Specifically, for any $x_0 \in \mathbb{R}^n$, we have $\|g(x_k) \|\to0$ as $k\to\infty$. Due to the continuity property of $g(x)$, we can conclude that $x_k\to x^*$ as $k\to\infty.$
\end{proof}
The convergence of iteration \eqref{UNCO-6}, like other gradient descent methods, slows down when the problem is more ill-conditioned \cite{UNCO-4,UNCO-12}. Therefore, it is necessary to use an accelerating technique. Fortunately, in our iterations  \eqref{UNCO-20} and \eqref{UNCO-30}, the second and third sub-steps act as accelerators.
\begin{corollary}
The fourth and sixth order convergence of iterations \eqref{UNCO-20} and \eqref{UNCO-30} immediately follows from Theorem \ref{UCOth2}.
\end{corollary}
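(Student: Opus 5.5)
The plan is to split the claim into two parts: global convergence of the first sub-step, and the local high-order rate contributed by the second and third sub-steps. Theorem~\ref{UCOth2} gives only global convergence of the gradient step \eqref{UNCO-6}. Since the first sub-step of \eqref{UNCO-20} and \eqref{UNCO-30} is exactly \eqref{UNCO-6}, we first use Theorem~\ref{UCOth2} to bring the iterates into a neighbourhood of $x^*$. There $g'(x^*)$ is nonsingular, and the local analysis of \cite{UNCO-10}, recalled after \eqref{UNCOTk-20}, applies. So the corollary should be read as: global convergence from Theorem~\ref{UCOth2}, combined with local order four or six from the accelerator sub-steps.

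First step: show that the full scheme \eqref{UNCO-20} still satisfies $\|g(x_{k+1})\|\le\|g(y_k)\|<\|g(x_k)\|$. The second inequality is \eqref{UNCO-P5} applied to $y_k$. For the first, I would note that $T_k=1+O(\|g(y_k)\|/\|g(x_k)\|)$ by \eqref{UNCOTk-20}. Then $x_{k+1}=y_k-\alpha_kT_kg(y_k)$ is, to leading order, again a step of type \eqref{UNCO-6} from $y_k$. Repeating the quadratic-model argument \eqref{UNCO-P1}--\eqref{UNCO-P2} at $y_k$ would give the decrease. With monotone decrease in hand, $\|g(x_k)\|\to0$ as in Theorem~\ref{UCOth2}, so $x_k$ eventually enters any prescribed ball around $x^*$.

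Second step: local order. Write $e_k=x_k-x^*$ and $g(x)=g'(x^*)\bigl(e+c_2e^2+c_3e^3+\dots\bigr)$. Since $w_k-x_k=g(x_k)$, formula \eqref{UNCO-7} makes $\alpha_k g(x_k)$ a Steffensen-type divided-difference step, so $y_k-x^*=O(e_k^2)$. Expanding $\Theta_k$, $r_k$ and $T_k$ to order $e_k$, the standard Traub-type cancellation shows that the correction $\alpha_kT_k$ approximates $g'(y_k)^{-1}$ with error $O(e_k^2)$. Hence $x_{k+1}-x^*=O(e_k^2)\cdot O(e_k^2)=O(e_k^4)$. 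Applying the same estimate once more to $z_k$ in \eqref{UNCO-30} gives $O(e_k^2)\cdot O(e_k^4)=O(e_k^6)$.

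The main obstacle is the first step. The decrease \eqref{UNCO-P5} was derived from a quadratic model with higher-order terms discarded, and $T_k$ is not exactly the minimising multiplier at $y_k$. A genuine global statement therefore needs either a safeguard (reject $T_k$ when $\|g\|$ increases) or an argument that the neglected $O(h^2)$ terms are dominated once $\|g(x_k)\|$ is small. I would try the latter: Theorem~\ref{UCOth2} gives the eventual smallness, and the local expansion of the second step then controls the remainder.
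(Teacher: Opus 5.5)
The gap is in your second step, the local-order argument. The Steffensen/Traub cancellation you invoke is a one-dimensional computation. It does not carry over to $\mathbb{R}^n$, because $\alpha_k$ in \eqref{UNCO-7} is a \emph{scalar}. Write $g_k=g(x_k)$. Since $g(w_k)-g(x_k)=g'(x^*)g_k+O(\|e_k\|^2)$, $\alpha_k$ is, to leading order, the Rayleigh-type quotient $g_k^Tg'(x^*)g_k/\|g'(x^*)g_k\|^2$, and
\[
y_k-x^*=\bigl(I-\alpha_k g'(x^*)\bigr)e_k+O(\|e_k\|^2).
\]
The matrix $I-\alpha_kg'(x^*)$ is not small unless $g'(x^*)$ is a multiple of the identity.

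A concrete counterexample, which satisfies every hypothesis in the paper: take $g(x)=A(x-x^*)$ with $A=\mathrm{diag}(\lambda_1,\lambda_2)$, $0<\lambda_1<\lambda_2$, and $e_k=(a,b)^T$. Then $\alpha_k=(\lambda_1^3a^2+\lambda_2^3b^2)/D$ with $D=\lambda_1^4a^2+\lambda_2^4b^2$, and
\[
y_k-x^*=\frac{\lambda_2-\lambda_1}{D}\bigl(\lambda_2^3ab^2,\,-\lambda_1^3a^2b\bigr)^T .
\]
This is of exact order $\|e_k\|$ whenever $ab\neq0$; it is the minimal-gradient zigzag, with $b/a$ simply alternating between two values.

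So $y_k-x^*=O(\|e_k\|^2)$ is false. For the same reason, $\alpha_kT_k$, which is a scalar by \eqref{UNCOTk-20}, cannot approximate the matrix $g'(y_k)^{-1}$ to within $O(\|e_k\|^2)$. In this example the remaining sub-steps only multiply the error by $\mathrm{diag}(1-\alpha_kT_k\lambda_1,\,1-\alpha_kT_k\lambda_2)$. Its entries are homogeneous of degree zero in $e_k$ and cannot both vanish. Hence along $e_k=\epsilon(1,1)^T$ the errors of \eqref{UNCO-20} and \eqref{UNCO-30} are still of exact order $\epsilon$, and no $O(\|e_k\|^4)$ or $O(\|e_k\|^6)$ error equation holds.

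Your expansion is correct for $n=1$. There, with $c_1=g'(x^*)$, one has $\alpha_k=c_1^{-1}(1-c_2(2+c_1)e_k+\dots)$ and $T_k=1+c_2(2+c_1)e_k+\dots$, so $\alpha_kT_k=c_1^{-1}(1+O(e_k^2))$. It also extends to the case $g'(x^*)=\lambda I$, which is exactly the situation of Example~\ref{UNCONEx-1} behind Table~\ref{UNCONtab-10}. For general systems you would have to replace $\alpha_k$ by an inverse divided-difference operator and $T_k$ by a matrix weight, i.e.\ change the scheme.

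Your first step also has problems, some of which you flag yourself. Theorem~\ref{UCOth2} concerns the iterates of \eqref{UNCO-6}, not those of \eqref{UNCO-20} or \eqref{UNCO-30}. So your repair, taking ``eventual smallness'' from Theorem~\ref{UCOth2} and then applying local control, applies the theorem to a different sequence. The decrease $\|g(x_{k+1})\|\le\|g(y_k)\|$ ``to leading order'' remains a heuristic. Moreover, monotone decrease of $\|g(x_k)\|$ only gives convergence to some limit, not to $0$; a sufficient-decrease estimate would be needed.

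More basically, no convergence rate can follow from Theorem~\ref{UCOth2}, which is a global statement about a first-order iteration. The paper offers only its one-line assertion, so nothing there closes the gap. Splitting the corollary into a global part and a local part is the right way to read it. But that split shows the corollary cannot be proved as stated: the local part fails for $n\ge2$ with $\alpha_k$ and $T_k$ as defined.
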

We refer to the step-size defined by \eqref{UNCO-7} as optimal because of \eqref{UNCO-12}.
It should be pointed out that our choices in \eqref{UNCO-7} and \eqref{UNCO-7new} are similar to the two
step-sizes used in the Barzilai and Borwein methods \cite{UNCO-1}, specifically,
\begin{equation}\label{UNCO-19}
\alpha_k=\frac{s_k^T y_k}{\|y_k\|^2}
\end{equation}
\begin{equation}\label{UNCO-19n}
\alpha_k=\frac{\|s_k\|^2}{s_k^T y_k},
\end{equation}
where $y_k=g_{k}-g_{k-1}$ and $s_k=x_{k}-x_{k-1}$.
These choices ensure super-linear convergence when the objective function is a convex
quadratic function of two variables \cite{UNCO-4}. In practice, various modifications and improvements of the BB-choices \eqref{UNCO-19} and \eqref{UNCO-19n} are often, see \cite{Ov20,Zho21} and references therein.

Our choice in \eqref{UNCO-7} is seen as a specific implementation of \eqref{UNCO-19} with
$s_k=w_k-x_k=g(x_k)$.
When the initial guess is far from the solution, a line search is
frequently necessary to attain global convergence.
Our optimal step-size ensures the quadratic convergence of the method \eqref{UNCO-6} and is considered a significant descovery in the line search procedure.
It is straightforward to transition from \eqref{UNCO-20} and \eqref{UNCO-30} to their scalar coefficient
variants \cite{UNCO-11}. We present the final results here.
\begin{equation}\label{UNCO-32}
\begin{aligned}
y_{k} &= x_k+\alpha_k d_k,\\
x_{k+1}&= x_k+\alpha_k(1+\beta_k)d_k,
\end{aligned}
\end{equation}
and
\begin{equation}\label{UNCO-34}
\begin{aligned}
y_k &= x_k+\alpha_k d_k,\\
z_k &= x_k+\alpha_k(1+\beta_k)d_k,\\
x_{k+1} &= x_k+\alpha_k(1+\beta_k+\gamma_k\beta_k)d_k,
\end{aligned}
\end{equation}
where
\begin{equation}\label{UNCO-36}
\beta_k=\|g(y_k)\|^2
\left(
\frac{1}{g(y_k)^Tg(x_k)}
+\frac{1}{\|g(x_k)\|^2}
+\frac{1}{g(x_k)^Tg(w_k)}
\right),
\end{equation}
and
\begin{equation}\label{UNCO-38}
\gamma_k=\frac{\|g(z_k)\|^2}{g(z_k)^T g(y_k)}.
\end{equation}
Choosing $d_k=-\alpha_kg(x_k)$ 
we obtain that
\begin{equation}\label{UNCO-44}
\begin{aligned}
y_{k} &= x_k+d_k,\\
x_{k+1} &= x_k+(1+\beta_k)d_k,
\end{aligned}
\end{equation}
and
\begin{equation}\label{UNCO-46}
\begin{aligned}
y_k &= x_k+d_k,\\
z_k &= x_k+(1+\beta_k)d_k,\\
x_{k+1}&= x_k+(1+\beta_k+\beta_k\gamma_k)d_k.
\end{aligned}
\end{equation}
instead of
\eqref{UNCO-32}, \eqref{UNCO-34}.
In addition to  \eqref{UNCO-1}, there is another optimization problem known as the convex quadratic minimization problem
\begin{equation}\label{UNCOlin-1}
\min f(x)=\frac{1}{2}x^TAx-x^Tb,
\end{equation}
where $b\in \mathbb{R}^n$ and $A\in \mathbb{R}^{n\times n}$ is symmetric positive define matrix. It is well known that the problem \eqref{UNCOlin-1} is equivalent to linear system
\begin{equation}\label{UNCOlin-2}
Ax-b=0.
\end{equation}
In this case the equation \eqref{UNCO-3} transforms into \eqref{UNCOlin-2}. Our proposed schemes \eqref{UNCO-6}, \eqref{UNCO-20} and \eqref{UNCO-30}, along with their scalar coefficient variants, work well.
The proposed one-step method \eqref{UNCO-6} is implemented as follows:
\begin{algorithm}[H]
\caption{Proposed SS1}
\label{alg:UNCO}
\begin{algorithmic}[1]
\State \textbf{Input:} Initial point $x_0 \in \mathbb{R}^n$,  tolerance $\varepsilon > 0$
\State \textbf{Initialize:} $k = 0$
\While{$\|g(x_k)\| \ge \varepsilon$}
    \State Compute $w_k = x_k +\, g(x_k)$
    \State Compute step size by formula \eqref{UNCO-7}
\State \textbf{(or )}
    formula \eqref{UNCO-7new}
 \State Compute direction $d_k = - g(x_k)$
    \State Update iterate $x_{k+1} = x_k + \alpha_kd_k$
    \State $k \leftarrow k + 1$
\EndWhile
\State \textbf{Output:} $x_k$
\end{algorithmic}
\end{algorithm}
\section{Numerical experiments} \label{Dsec5}
In this section, we present the numerical results obtained from implementing the proposed methods in MATLAB R2024a for a set of test functions selected from \cite{UNCO-12,UNCO-1222}.
The proposed methods are compared with various gradient-based methods introduced in \cite{UNCO-4, UNCO-12, Hes22}, including the Barzilai–Borwein (BB) method and the Neculai Andrei (NA) method.
These experiments were conducted using an Intel Core i5-4590 processor running at 3.30 GHz with 4096 MB of RAM
memory. 
The iteration is terminated when either $|g(x_k)| \leq 10^{-6}$ or after reaching the maximum number of iterations (2000).
In our experiments, we denote the proposed one-step method \eqref{UNCO-6}, two-step method \eqref{UNCO-20}, and three-step method \eqref{UNCO-30} as SS1, SS2, and SS3, respectively. The parameter $\alpha_k$ for SS1, SS2, and SS3 is selected according to \eqref{UNCO-7}. It is important to note that when $\alpha_k$ is chosen based on \eqref{UNCO-7new}, all three methods yield similar results. The parameter $T_k$ is also utilized for SS2 and SS3, as defined in \eqref{UNCOTk-20}. Additionally, for the BB method \cite{UNCO-1}, we choose the parameter $\alpha_k$ as specified in \eqref{UNCO-19n}.

Tables \ref{UNCONtab-1}–\ref{UNCONtab-8}, clearly demonstrate the effectiveness of the proposed methods on the best functions considered. Specifically, the SS2 and SS3 methods show faster convergence, requiring fewer iterations than the competing methods in most cases.
Furthermore, to assess the performance of our methods, we compare our results with those presented in \cite{Ov20} for convex quadratic minimization problems. Table \ref{UNCONtab-9} provides this comparison.
In the tables, ``--'' indicates that the number of iterations exceeded the maximum allowed. We compute the approximate computational order of convergence (ACOC) of the proposed methods using the formula provided in \cite{Joy24, UNCO-10}:
\begin{equation}\label{UNCOC1223}
\rho_k = \frac{
\log \left( \| g(x_{k+1}) \| / \| g(x_{k}) \| \right)
}{
\log \left( \| g(x_{k}) \| / \| g(x_{k-1}) \| \right)
}.
\end{equation}
Formula \eqref{UNCOC1223} is commonly used to compute the approximate computational order of convergence (ACOC) of iterative methods for solving systems of nonlinear equations. The results are reported in Table \ref{UNCONtab-10} obtained using the formula \eqref{UNCOC1223} for the gradient system (refer to equation \eqref{UNCONEx-st33}) corresponding to Example \ref{UNCONEx-1}. We present the ACOC for a simple test problem; however, for more challenging cases such as ill-conditioned problems, calculating the ACOC using formula \eqref{UNCOC1223} becomes challenging.
\begin{exam} \label{UNCONEx-1}
\begin{equation*}
f(x) = \sum_{i=1}^{n} \left(e^{x_i} - x_i\right),
\end{equation*}
Initial approximation: $x_0 = (1, 1, \ldots, 1, \ldots, 1)^T $
\end{exam}
\begin{exam} \label{UNCONEx-2}
\begin{equation*}
\begin{aligned}
f(x) &= \big((5 - 3x_1 - x_1^2)x_1 - 3x_2 + 1\big)^2 \\
&\quad + \sum_{i=2}^{n-1}
\big((5 - 3x_i - x_i^2)x_i - x_{i-1} - 3x_{i+1} + 1\big)^2 \\
&\quad + \big((5 - 3x_n - x_n^2)x_n - x_{n-1} + 1\big)^2,
\end{aligned}
\end{equation*}
Initial approximation: $x_0 = (-0.8, -0.8, \dots, -0.8)^T .$
\end{exam}
\begin{exam} \label{UNCONEx-3}
\begin{equation*}
f(x) = \sum_{i=1}^{n-1} \left[ (x_{i+1} - x_i^2)^2 + (1 - x_i)^2 \right],
\end{equation*}
Initial approximation: $x_0 = (-1.2, -1.2,  \ldots, -1.2)^T $
\end{exam}
\begin{exam} \label{UNCONEx-4}
\begin{equation*}
f(x) = \sum_{i=1}^{n} \frac{i}{10} (\exp(x_i) - x_i),
\end{equation*}
\end{exam}
Initial approximation: $x_0 = (0.3, 0.3,  \ldots, 0.3)^T $
\begin{exam} \label{UNCONEx-5}
\begin{equation*}
f(x) = \sum_{i=1}^{n-1} \left( x_{i+1} - x_i^3 \right)^2 + (1 - x_i)^2
\end{equation*}
\end{exam}
Initial approximation: $x_0 = (-1, 2, 1  \ldots, -1,2,1)^T $
\begin{exam} \label{UNCONEx-6}
\begin{equation*}
f(x) = \sum_{i=1}^{n/2} \Bigg[
\big(x_{2i-1}^2 + x_{2i}^2 + x_{2i-1} x_{2i}\big)^2
+ \sin^2(x_{2i-1}) + \cos^2(x_{2i})
\Bigg],
\end{equation*}
Initial approximation: $x_0 = (3, 0.1, \dots, 3, 0.1)^T$
\end{exam}
\begin{exam} \label{UNCONEx-8} We consider the problem of minimizing the following function:
\begin{equation*}
f(x) = \frac{1}{2} \langle x, Ax \rangle - \langle b, x \rangle,
\end{equation*}
where $A =diag\left(1,2,3\dots,100\right)$.
$b = (1,1,\dots,1).$
Initial approximation: $x_0 = (0, 0, \dots,0,0)^T$
\end{exam}
\begin{exam} \label{UNCONEx-9}
$A$ is a tridiagonal matrix whose entries are defined by
\begin{equation}
a_{i,i} = \frac{2}{h^2}; \quad a_{i,i-1} = -\frac{1}{h^2} \text{ if } i \neq 1; \quad a_{i,i+1} = -\frac{1}{h^2} \text{ if } i \neq n,
\end{equation}
for all $i \in \{1, \dots, n\}$, where $h = 11/n$ and $n$ varies in $n \in \{500, 1000, 1500, 2000\}$, this example was taken from \cite{Li19}. The optimal solution $x^*$ was first generated by the following MATLAB command $x^* = -10 + 20 * \text{rand}(n, 1)$ and then we set $b = Ax^*$. These types of problems often arise in the numerical solution of two-point boundary value problems, and can result in very ill-conditioned matrices. 
\end{exam}
\begin{table}[th!]
\centering \caption{Iteration number for Example \ref{UNCONEx-1}} 
\begin{tabular}{lllccc}
\hline
$n$ & SS1   &SS2&SS3&BB&NA\\ \hline
1000&7&4&4&7&6 \\ \hline
2000&7&4&4&7&6\\ \hline
5000&7&5&5&7&7\\ \hline
10000&7&5&4&7&6\\ \hline
50000&7&5&4&7&7\\ \hline
100000&7&5&4&7&7\\ \hline
\end{tabular}\label{UNCONtab-1}
\end{table}
\begin{table}[th!]
\centering \caption{Iteration number for Example \ref{UNCONEx-2}} 
\begin{tabular}{lllccc}
\hline
$n$ & SS1   &SS2&SS3&BB&NA\\  \hline
1000&72&37&18&27&42 \\ \hline
2000&31&39&19&24&48\\ \hline
5000&28&36&18&24&22\\ \hline
10000&25&34&18&24&22\\ \hline
20000&29&30&18&32&23\\ \hline
50000&41&21&18&24&25\\ \hline
\end{tabular}\label{UNCONtab-2}
\end{table}
\begin{table}[th!]
\centering \caption{Iteration number for Example \ref{UNCONEx-3}} 
\begin{tabular}{lllccc}
\hline
$n$ & SS1   &SS2&SS3&BB&NA\\ \hline
1000&304&60&54&75&65 \\ \hline
2000&296&66&44&76&65\\ \hline
5000&293&60&45&87&88\\ \hline
10000&302&62&48&65&63\\ \hline
20000&326&60&109&74&84\\ \hline
50000&361&60&53&94&81\\ \hline
\end{tabular}\label{UNCONtab-3}
\end{table}
\begin{table}[th!]
\centering \caption{Iteration number for Example \ref{UNCONEx-4}} 
\begin{tabular}{lllcc}
\hline
$n$ & SS1   &SS2&BB&NA\\ \hline
1000&494&294&276&261 \\ \hline
2000&925&391&466&381\\ \hline
5000&1902&715&689&685\\ \hline
10000&-&708&1093&1982\\ \hline
\end{tabular}\label{UNCONtab-4}
\end{table}
\begin{table}[th!]
\centering \caption{Iteration number for Example \ref{UNCONEx-5}} 
\begin{tabular}{lllcc}
\hline
$n$ & SS1   &SS2&BB&NA\\ \hline
1000&1209&182&232&197 \\ \hline
2000&1134&183&222&191\\ \hline
5000&1019&175&233&198\\ \hline
10000&1063&185&201&204\\ \hline
20000&951&180&215&226\\ \hline
50000&1009&168&214&220\\ \hline
\end{tabular}\label{UNCONtab-5}
\end{table}
\begin{table}[th!]
\centering \caption{Iteration number for Example \ref{UNCONEx-6}} 
\begin{tabular}{lllccc}
\hline
$n$ & SS1   &SS2&SS3&BB&NA\\ \hline
1000&110&39&26&29&31 \\ \hline
2000&111&39&26&30&28\\ \hline
5000&111&39&26&31&28\\ \hline
10000&112&39&26&31&28\\ \hline
20000&112&39&26&28&26\\ \hline
50000&115&41&29&35&32\\ \hline
\end{tabular}\label{UNCONtab-6}
\end{table}
\begin{table}[th!]
\centering \caption{Results for Example \ref{UNCONEx-8}} 
\begin{tabular}{lllc}
\hline
Methods&$k$&$\|g(x_k)\|$&CPU time\\ \hline
SS1&690&9.81e-07 &0.09 \\ \hline
SS2&46&6.40e-07&0.02 \\ \hline
SS3&37&8.02e-08&0.015\\ \hline
BB&102& 8.33e-07&0.02\\ \hline
NA&119&2.75e-07&0.02\\ \hline
\end{tabular}\label{UNCONtab-8}
\end{table}

\begin{table}[ht!]
\centering
\caption{Results for Example \ref{UNCONEx-9}}
\begin{tabular}{l|ccc|ccc}
\hline
Method & $k$ & CPU time & $\|g(x_k)\|$ & $k$ & CPU time & $\|g(x_k)\|$\\
\hline
& \multicolumn{3}{c|}{$n = 500$} & \multicolumn{3}{c}{$n = 1000$} \\
\hline
SS1 & 47 & 0.01 & 5.89e-07 & 49 & 0.01 & 7.56e-07\\
SS2 & 24 & 0.01 & 8.22e-07 & 26 & 0.01 & 3.16e-07\\
SS3 & 17 & 0.01 & 2.52e-07 & 18 & 0.02 & 1.78e-07\\
BB  & 5540 & 0.25 & 2.5e-07 & 14427 & 2.68 & 8.91e-07\\
ABB & 3457 & 0.17 & 6.89e-9 & 7842 & 1.53 & 8.13e-9 \\
ABBmin1 & 2514 & 0.14 & 7.18e-9 & 5326 & 1.09 & 6.98e-9 \\
ODH1 & 5851 & 0.32 & 8.67e-9 & 15060 & 3.08 & 8.07e-9 \\
CG  & 500 & 0.02 & 7.86e-08 & 1000 & 0.19 & 7.15e-08\\
\hline
& \multicolumn{3}{c|}{$n = 1500$} & \multicolumn{3}{c}{$n = 2000$} \\
\hline
SS1 & 51 & 0.03 & 5.56e-07 & 52 & 0.05 & 5.14e-07\\
SS2 & 27 & 0.04 & 2.74e-07 & 27 & 0.06 & 5.57e-07\\
SS3 & 18 & 0.02 & 5.53e-07 & 19 & 0.05 & 2.04e-07\\
BB  & 22706 & 23.48 & 8.07e-07 & 29538 & 52.83 & 8.82e-07\\
ABB & 11805 & 12.40 & 5.81e-9 & 19411 & 34.49 & 7.57e-9 \\
ABBmin1 & 7843 & 8.26 & 6.49e-9 & 10931 & 19.59 & 6.39e-9 \\
ODH1 & 20930 & 22.13 & 8.01e-9 & 31022 & 57.37 & 1.27e-7 \\
CG  & 1500 & 1.57 & 6.63e-8 & 2000 & 3.56 & 1.04e-8\\
\hline
\end{tabular}\label{UNCONtab-9}
\end{table}
\begin{table}[th!]
\centering \caption{Rate of convergence for Example \ref{UNCONEx-1}} 
\begin{tabular}{lccr}
\hline
\text{Method}    &$k$& $\|g(x_{k})\|$ &\text{ACOC}\\ \hline
SS1 &9&9.502e-23 & 2.00\\
SS2 &6& 2.1082e-18& 3.98\\
SS3 &5&2.6362e-14 & 5.76\\
\hline
\end{tabular}\label{UNCONtab-10}
\end{table}

In some cases, the obtained gradient system needed further simplification. For example, in \ref{UNCONEx-4}, the gradient system takes on a specific form
\begin{equation}\label{UNCONEx-st}
A\tilde{g}(x)=0,
\end{equation}
where  $A=diag\left(\dfrac{1}{10}, \dfrac{2}{10}, \ldots, \dfrac{n}{10}\right)^T$ is ill-conditional matrix. Thus, the solution of system \eqref{UNCONEx-st} has artificial difficulty. After multiplication by $A^{-1}$ the equation \eqref{UNCONEx-st} leads to
\begin{equation} \label{UNCONEx-st33}
\tilde{g}(x) =
\left\{
\begin{aligned}
\exp(x_1)& - 1, \\
\exp(x_2)& - 1, \\
\vdots& \\
\exp(x_n)& - 1,
\end{aligned}
\right.
\end{equation}
which equivalent to \eqref{UNCONEx-st} and is easy to solve. Upon careful comparison, it is evident that equation~\eqref{UNCONEx-st33} is identical to $g(x)$ in Example~\ref{UNCONEx-1} (refer to Table~\ref{UNCONtab-1}).
Essentially, comparing Table~\ref{UNCONtab-1} and Table~\ref{UNCONtab-4} shows that preconditioning by $A^{-1}$ results in fewer iterations.
\section{Conclusions}\label{UNC}
In this paper, we have proposed a class of super-schemes for solving nonlinear unconstrained optimization problems. The main contributions of this work can be summarized as follows:
\begin{itemize}
  \item We have developed high-order iterative methods (SS1, SS2, and SS3) that achieve local convergence orders of two, four, and six, respectively.
  \item The computational complexity of the proposed methods is comparable to existing gradient-based approaches, with a cost of $\mathcal{O}(n^2)$ per iteration.
  \item 
  The proposed approach introduces two novel strategies for selecting step-size parameters ensuring the global convergence of the methods.
  \item The methods rely on component-wise operations of vectors rather than matrix-by-matrix operations, enhancing their simplicity and efficiency.
  \item The proposed schemes are particularly suitable for large-scale and ill-conditioned problems.
\end{itemize}
Given these advantages, the proposed super-schemes provide an effective and practical framework for unconstrained optimization, offering new directions for both theoretical research and numerical applications in this field.

\end{document}